\newtheoremstyle{standard}
 {16pt}  
 {16pt}  
 {}  
 {}  
 {\bfseries}
 {}  
 { } 
 {{\thmname{#1~}}{\thmnumber{#2.}}\thmnote{~(#3)}} 
\newtheoremstyle{kursiv}
 {16pt}  
 {16pt}  
 {\itshape}  
 {}  
 {\bfseries}
 {}  
 { } 
 {{\thmname{#1~}}{\thmnumber{#2.}}\thmnote{~(#3)}} 
\theoremstyle{standard}
\newtheorem{defn} [subsection]{Definition}
\newtheorem{rem}   [subsection]{Remark}
\newtheorem{setup} [subsection]{}
\theoremstyle{definition}
\theoremstyle{kursiv}
\newtheorem*{thm*}{Theorem}
\newtheorem*{thmA}{Theorem A}
\newtheorem*{thmB}{Theorem B}
\newtheorem*{thmC}{Theorem C}
\newtheorem*{thmD}{Theorem D}
\newtheorem{prop} [subsection]{Proposition}
\newtheorem{lem} [subsection]{Lemma}
\newcommand{\im}{\mathrm{im}}
\newcommand{\N}{\mathbb{N}}
\newcommand{\R}{\mathbb{R}}
\newcommand{\K}{\mathbb{K}}
\renewcommand{\epsilon}{\varepsilon}
\newcommand{\setm}[2]{\left\{\, #1 \middle\vert #2\,\right\}}
\newcommand{\abs}[1]{\left| #1 \right|}
\newcommand{\caseelse}[3]{\left\{ \begin{array}{r l} #1 & \  \mathrm{if} \  #2 \\ #3 & \ \mathrm{else.} \end{array} \right.}
\DeclareSymbolFont{bbold}{U}{bbold}{m}{n}
\DeclareSymbolFontAlphabet{\mathbbold}{bbold}
\newcommand{\cA}{\ensuremath{\mathcal{A}}}
\newcommand{\func}[5]{#1 \colon #2 \rightarrow #3 , #4 \mapsto #5}
\newcommand{\smfunc}[3]{#1 \colon #2 \rightarrow #3}
\newcommand{\nnfunc}[4]{#1 \rightarrow #2 : #3 \mapsto #4}
\newcommand{\smset}[1]{ \left\{ #1 \right\} }
\newcommand{\seqa}[1]{\left( #1 \right)_{\alpha\in A}}
\newcommand{\Frechet}{Fr\'echet }
\newcommand{\Func}[5]{
\begin{array}{rl}
   #1 : #2 & \longrightarrow #3   \\
        #4 & \longmapsto     #5   \\
\end{array}
}
\DeclareMathOperator{\supp}{supp}
\DeclareMathOperator{\embdim}{embdim}
\DeclareMathOperator{\conncomp}{conncomp}
\title{Smooth embeddings of the Long Line and other non-paracompact manifolds into locally convex spaces} 
 \author{  Rafael Dahmen\footnote{Technische Universit\"at Darmstadt, Germany. 
 \href{mailto:dahmen@mathematik.tu-darmstadt.de}{dahmen@mathematik.tu-darmstadt.de}}%
 \date{March 26, 2015}
 }
\begin{document}

\maketitle

\begin{abstract}
 We show that every real finite dimensional Hausdorff (not necessarily paracompact, not necessarily second countable) $C^r$-manifold can be embedded into a weakly complete vector space, i.e.~a locally convex topological vector space of the form ${\mathbb R}^I$ for an uncountable index set $I$ and determine the minimal cardinality of $I$ for which such an embedding is possible.
\end{abstract}

\medskip

\textbf{Keywords:} non-paracompact manifolds, long line, locally convex space, weakly complete space

\medskip

\textbf{MSC2010:} 57R40 (primary); 
46T05, 
46A99  
 (Secondary)


\section{Introduction and statement of the results}

We review the classical Theorem of Whitney (see e.g. \cite[Theorem 1]{MR1503303}, \cite[Theorem 6.3]{MR0470974}, or \cite[Theorem 2.2]{MR1225100}):

\begin{thm*}[Whitney]
 Let $M$ be a $d$-dimensional second countable Hausdorff $C^r$-manifold ($d\geq1$). Then there exists a $C^r$-embedding into $\R^{2d}$.
\end{thm*}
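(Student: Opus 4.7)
The plan is to split the proof into two independent phases: first produce a $C^r$-embedding of $M$ into some high-dimensional Euclidean space $\R^N$, and then cut the target dimension down to $2d$ using generic linear projections.

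For the first phase, second countability together with the Hausdorff property yields a locally finite countable atlas $(U_n, \varphi_n)_{n \in \N}$ with relatively compact domains, and a subordinate smooth partition of unity $(\chi_n)_{n \in \N}$. I would define maps $\psi_n \colon M \to \R^d$ by extending $\chi_n \cdot \varphi_n$ by zero off $U_n$, and assemble them together with the $\chi_n$ (and a proper exhaustion function $f \colon M \to \R$) into a single $C^r$-map $\Psi \colon M \to \R^d \times \R \times \cdots$. On a chart where $\chi_n \equiv 1$ the component $\psi_n$ reproduces $\varphi_n$, so $\Psi$ is an immersion; the exhaustion function makes $\Psi$ proper, and properness together with injectivity (which follows by separating points using the $\chi_n$) upgrades the immersion to a topological embedding. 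To keep $N$ finite, refine the atlas so that every point is covered by at most $d+1$ charts; this caps the number of nonzero components at each point and places the image inside some finite-dimensional $\R^N$.

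The second phase reduces $N$ down to $2d+1$ by iterated generic projection. Given an embedding into $\R^N$ with $N > 2d+1$, consider the smooth map $(p,q,t) \mapsto t(p-q)$ from $(M\times M\setminus\Delta)\cup TM$ to $\R^N$; its image has dimension at most $2d+1$, and by Sard's theorem its complement on $\SSS^{N-1}$ is nonempty. Any unit vector $v$ outside this image gives an orthogonal projection $\pi_v$ that remains an injective immersion, so composition with $\pi_v$ yields an embedding into $\R^{N-1}$, which is again proper since the perpendicular component is bounded along the image. Iterating terminates once $N = 2d+1$.

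The hard part is the final step from $2d+1$ down to $2d$, where Sard's theorem no longer suffices because the set of bad directions becomes generic rather than negligible. Here one first shows that a generic projection produces a $C^r$-immersion $M \to \R^{2d}$ whose only failure of injectivity consists of finitely many (or at least discretely many, in the noncompact case handled via the properness from the first phase) transverse double points. Whitney's trick then eliminates these double points in oppositely-signed pairs by an isotopy supported in a tubular neighborhood of an arc joining the two intersection points, which preserves the immersion property while removing the self-intersections. Carrying out this surgery carefully, and verifying that it preserves properness and the embedding property in the second countable noncompact setting, is the main technical obstacle of the argument.
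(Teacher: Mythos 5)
The paper does not prove this statement at all: it is quoted as a classical theorem with references to the literature, so there is no in-paper argument to compare against. Judged on its own, your outline correctly reproduces the standard two-stage strategy (embed into some $\R^{N}$, then project down), and the reduction to $\R^{2d+1}$ via generic projections is essentially complete modulo routine care (e.g.\ keeping a proper coordinate untouched, or first making the image bounded, so that properness survives each projection; and checking that the measure-zero argument only needs the secant and tangent data, which live in dimensions $2d$ and $2d-1$, to miss a direction on $\SSS^{N-1}$).

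The genuine gap is the last step, from $2d+1$ to $2d$, which is exactly where the content of the theorem as stated lies --- and your proposal describes this step rather than proving it, explicitly deferring ``the main technical obstacle.'' Concretely: (i) the Whitney trick that cancels a pair of double points requires $d\geq 3$ (the model disk needs codimension large enough to be embedded and to be isotoped off the image), so $d=1$ and $d=2$ need separate ad hoc arguments ($d=1$ by classification of $1$-manifolds, $d=2$ by a direct construction of surfaces in $\R^4$); (ii) the pairing of double points ``with opposite signs'' only makes sense after fixing orientations, and for non-orientable $M$ or odd $d$ the intersection number lives in $\Z/2$ and one must first use finger moves to create cancelling partners --- none of which is set up in your sketch; (iii) in the non-compact second countable case one must arrange the (discretely many) double points into a locally finite family of cancelling pairs and perform infinitely many disjointly supported isotopies, and verify that properness and the $C^r$ regularity (for finite $r$ one typically first smooths the structure to $C^\infty$) survive. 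As written, your argument establishes the easy Whitney theorem with target $\R^{2d+1}$; the claimed bound $2d$ is not actually reached.
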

The conditions (Second Countability and the Hausdorff-property) are obviously also necessary, since every Euclidean space $\R^n$ is second countable and Hausdorff and so are all of its subsets. The dimension $2d$ in the Theorem is sharp in the sense that whenever $d$ is a power of two, i.e.~$d=2^k$, there is a $2^k$-dimensional second countable Hausdorff manifold which can not be embedded into $\R^{2^k+1}$.

Unfortunately, not every Hausdorff $C^r$-manifold is second countable. Perhaps the easiest connected manifold where the second axiom of countability does not hold, is the \emph{Long Line} and its relative the \emph{Open Long Ray} (see e.g.~\cite{MR0101321}). For the reader's convenience, we will recall the definition:
\newcommand{\ClosedLongRay}{\mathbb{L}^+_C}
\newcommand{\OpenLongRay}{\mathbb{L}^+}
\newcommand{\LongLine}{\mathbb{L} }

\begin{defn}[The Alexandroff Long Line]					\label{defn: LongLine}
 \begin{itemize}
  \item [(a)]  Let $\omega_1$ be the first uncountable ordinal. The product 
		$\ClosedLongRay  :=\omega_1\times \left[0,1\right[$, endowed with the lexicographical (total) order, becomes a topological space with the order topology, called the \emph{Closed Long Ray}. This space is a connected (Hausdorff) one-dimensional topological manifold with boundary $\smset{(0,0)}$.
  \item [(b)] To obtain a manifold without boundary, one removes this boundary point.
		The resulting open set $\OpenLongRay:=\ClosedLongRay\setminus\smset{(0,0)}$ is called the \emph{Open Long Ray}.
  \item [(c)] A different way to obtain a manifold without boundary is to consider two copies of the Closed Long Ray and glue them together at their boundary points. The resulting one-dimensional topological manifold $\LongLine$ is called the \emph{Long Line}\footnote{Some authors use the term Long Line for what we call here Long Ray.}.
 \end{itemize}
\end{defn}
 The spaces $\LongLine,\OpenLongRay,$ and $\ClosedLongRay$ are locally metrizable but since countable subsets are always bounded, none of the three spaces is separable. So, in particular, they are not second countable. Since we are only considering manifolds without boundary in this paper, for us only the Open Long Ray and the Long Line are interesting. 

 It is known that there exist $C^r$-structures on $\OpenLongRay$ and on $\LongLine$ for each $r\in\N\cup\smset{\infty}$. They are however not unique up to diffeomorphism. For example, there are $2^{\aleph_1}$ pairwise non-diffeomorphic $C^\infty$-structures on $\LongLine$ (cf. \cite{MR1164707}). 

 The Long Line and the Open Long Ray are by far not the only interesting examples of non-second countable manifolds. A famous two dimensional example (which has been known even before the Long Line) is the so called  \emph{Pr\"ufer manifold} (see \cite{51.0273.01} for a definition).

Since these manifolds fail to be second countable they cannot be embedded into a finite dimensional vector space. However, one can ask the question if it is possible to embed them into an infinite dimensional space. Of course, before answering this question, one has to say concretely what this should mean as there are different, non-equivalent notions of  differential calculus in infinite dimensional spaces: We use the setting of Michal-Bastiani, based on Keller's $C_c^r$-calculus (see \cite{MR1911979}, \cite{keller1974}, \cite{MR830252} and \cite{MR2261066}). This setting allows us to work with $C^r$-maps between arbitrary locally convex spaces, as long as they are Hausdorff. A manifold modeled on a locally convex space can be defined via charts the usual way, there is a natural concept of a $C^r$-submanifold generalizing the concept in finite dimensional Euclidean space. Important examples of locally convex spaces are Hilbert spaces, Banach spaces, \Frechet spaces and infinite products of such spaces such as $\R^I$ for an arbitrary index set $I$. Since this differentiable calculus explicitly requires the Hausdorff property, we will not consider embeddings of non-Hausdorff manifolds, although there are interesting examples of those (occurring naturally as quotients of Hausdorff manifolds, e.g. leaf spaces of foliations etc.) Our first result is the following:

\begin{thmA}\ \\
 Let $r\in\smset{1,2,\ldots}\cup\smset{\infty}$.
 Let $M$ be a finite dimensional Hausdorff $C^r$-manifold (not necessary second countable). Then there exists a set $I$ such that $M$ can be $C^r$-embedded into the locally convex topological vector space $\R^I$.
\end{thmA}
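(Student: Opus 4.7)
The strategy is to mimic the finite-dimensional Whitney construction, but to exploit the freedom offered by the product space $\R^I$ with arbitrary index set $I$: since $I$ may be made arbitrarily large, one may absorb all chart patches of $M$ at once, without requiring a countable atlas or a global partition of unity.

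Let $d = \dim M$. First, cover $M$ by a family of charts $(U_\alpha, \phi_\alpha)_{\alpha \in I}$ in which, for each $\alpha$, one fixes nested open subsets $W_\alpha \subset V_\alpha$ with $\overline{V_\alpha} \subset U_\alpha$ and $\overline{V_\alpha}$ compact, and so that already $\bigcup_{\alpha \in I} W_\alpha = M$. This is possible because every point of $M$ lies in a locally Euclidean (hence locally compact) neighborhood, so one can always carve out such nested open sets inside any chart; no paracompactness of $M$ is used. For every $\alpha \in I$ construct a $C^r$ bump function $\rho_\alpha \colon M \to [0,1]$ with $\rho_\alpha \equiv 1$ on $W_\alpha$ and $\supp \rho_\alpha \subset V_\alpha$ by pulling back a standard $C^r$ bump from $\R^d$ via $\phi_\alpha$ and extending by zero outside $\overline{V_\alpha}$; this is a purely local construction, again requiring no paracompactness. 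Then define
$$ f \colon M \to \R^I \times (\R^d)^I, \qquad f(x) \coloneq \bigl( \rho_\alpha(x),\ \rho_\alpha(x)\phi_\alpha(x) \bigr)_{\alpha \in I},$$
where $\rho_\alpha(x)\phi_\alpha(x)$ is understood to be $0$ outside $U_\alpha$. The target is linearly isomorphic to $\R^J$ for some set $J$ with $|J| = (d+1)\,|I|$, so is of the required form. Smoothness of $f$ is automatic, since in the Michal--Bastiani calculus a map into a product is $C^r$ iff every component is.

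Three points remain to be verified. (i) Injectivity: if $x \neq y$, pick $\alpha$ with $x \in W_\alpha$; then $\rho_\alpha(x) = 1$, and either $\rho_\alpha(y) \neq 1$ so the $\rho_\alpha$-coordinate separates them, or $\rho_\alpha(y) = 1$, in which case $y \in \supp \rho_\alpha \subset U_\alpha$ and the equalities $\rho_\alpha(x)\phi_\alpha(x)=\phi_\alpha(x)$ and $\rho_\alpha(y)\phi_\alpha(y)=\phi_\alpha(y)$ together with injectivity of the chart $\phi_\alpha$ force $x=y$. (ii) Topological embedding: on the open set $\{y : y_{\alpha,0} > 1/2\} \subset \R^I \times (\R^d)^I$ the continuous formula $y \mapsto \phi_\alpha^{-1}(y_\alpha/y_{\alpha,0})$ defines a left inverse to $f|_{\{\rho_\alpha > 1/2\}}$, which shows that $f$ is a homeomorphism onto its image. (iii) $C^r$-submanifold structure at a point $x_0 \in W_\alpha$: split $\R^I \times (\R^d)^I = \R^d \oplus E$ where $\R^d$ is the $\alpha$-th $\R^d$-slot and $E$ comprises all other factors; on $W_\alpha$ the $\R^d$-slot equals $\phi_\alpha$, a diffeomorphism onto its image, and every other coordinate of $f(x)$ is a $C^r$-function of $\phi_\alpha(x)$, so $f(W_\alpha)$ is locally the graph of a $C^r$-map $\phi_\alpha(W_\alpha) \to E$, which is the standard Michal--Bastiani notion of a split $C^r$-submanifold.

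The main obstacle I expect is not any single step but the careful bookkeeping in (iii): one has to verify that the graph description inside $\R^I \times (\R^d)^I$ genuinely matches the definition of a $C^r$-submanifold in the Michal--Bastiani framework, and that the local parametrisations obtained from different indices $\alpha$ are mutually compatible. The remaining ingredients (charts, nested neighborhoods, bump functions, smoothness into products, continuous inversion on $\{\rho_\alpha > 1/2\}$) are essentially local and follow the standard Whitney template, which is precisely why allowing $I$ to be of arbitrary cardinality removes the paracompactness hypothesis.
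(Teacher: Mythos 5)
Your proof is correct, but it takes a genuinely different (more classical) route than the paper. The paper chooses the index set $I:=C_c^r(M,\R)$, the set of \emph{all} compactly supported $C^r$-functions on $M$, and embeds $M$ via $\Phi(x)=(f(x))_{f\in I}$. The topological-embedding step is verified exactly as in your step (ii) in spirit (a bump function equal to $1$ at a point and supported in a prescribed neighborhood), but instead of writing down local inverses and graph charts by hand, the paper only checks that each tangent map $T_a\Phi$ is injective (using functions of the form $\theta\cdot\lambda$ with $\lambda$ linear) and then invokes two general facts: for a finite-dimensional domain, pointwise injectivity of the tangent map already makes the map locally an embedding (the Immersion Lemma, whose proof is the same graph-of-a-$C^r$-map argument you carry out in your step (iii)), and a topological embedding that is locally an embedding is an embedding (Proposition \ref{prop: topological immersion}). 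Since every component $\rho_\alpha$ and $\rho_\alpha\cdot(\phi_\alpha)_i$ of your map is itself a compactly supported $C^r$-function, your embedding is in effect a coordinate sub-tuple of the paper's $\Phi$. What your route buys is explicitness: concrete continuous local inverses on $\left\{ y_{\alpha,0}>1/2\right\}$ and explicit graph charts. What the paper's route buys is that no atlas or system of nested neighborhoods has to be chosen, and the immersion check collapses to one line, with the graph bookkeeping done once and for all in the Immersion Lemma. One point you should make explicit in your step (iii): to conclude that $f(M)$ itself, and not merely $f(W_\alpha)$, is a graph near $f(x_0)$, you need the topological-embedding property from (ii) to see that $f(W_\alpha)$ is relatively open in $f(M)$; this is precisely the content of the paper's Proposition \ref{prop: topological immersion}, and you have all the ingredients to apply it.
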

%

A locally convex vector space of the type $\R^I$ is called \emph{weakly complete vector space} (see \cite[Appendix C]{HopfPaper} or \cite[Appendix 2]{MR2337107}). These weakly complete spaces form a good generalization of finite dimensional vector spaces. The cardinality of the set $I$, sometimes called the \emph{weakly complete dimension}, is a topological invariant of $\R^I$ (see Lemma \ref{lem: weakly complete dimension}). This gives rise to the following question: Given a finite dimensional $C^r$-manifold $M$, what is the minimal weakly complete dimension which is necessary to embed $M$?
Unfortunately, we will not be able to answer this question completely, but we will give upper and lower bounds and if the Continuum Hypothesis is true, then we have a complete answer:

\begin{thmB}\ \\
 Let $M$ be finite dimensional Hausdorff $C^r$-manifold (not necessary second countable). The \emph{embedding dimension} of $M$ is defined as
 \[
  \embdim(M):=\min\setm{\abs{I}\ }{\hbox{There is a $C^r$-embedding of $M$ into $\R^I$}}.
 \]
 Furthermore, let $\conncomp(M)$ denote the number of connected components of $M$. Then the following holds:
 \begin{itemize}
  \item [(a)] The cardinal $\embdim(M)$ is finite if and only if $M$ is second countable.
  \item [(b)] If $M$ is not second countable and $\conncomp(M)\leq 2^{\aleph_0}$ (the  continuum), then 
            \[
             \aleph_1 \leq \embdim(M) \leq 2^{\aleph_0}.
            \]
  \item [(c)] If $\conncomp(M)\geq 2^{\aleph_0}$, then
            \[
             \embdim(M) = \conncomp(M).
            \]
 \end{itemize}
 In particular, $\embdim(M)$ is never equal to $\aleph_0$.
\end{thmB}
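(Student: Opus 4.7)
\medskip

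My approach combines four small ingredients: Whitney's theorem, a quantitative refinement of Theorem~A, a weight/cardinality argument, and a disjoint-union construction on the connected components. Part (a) is immediate: second countability of $M$ gives $\embdim(M) \leq 2d < \infty$ by Whitney, and conversely any finite embedding dimension exhibits $M$ as a subspace of some $\R^n$, hence of a second countable space, so $M$ itself is second countable. The ``never $\aleph_0$'' claim and the lower bound $\aleph_1 \leq \embdim(M)$ in (b) then follow from a single observation: $\R^I$ is second countable whenever $|I| \leq \aleph_0$, being (at most) a countable product of second countable spaces, so an embedding of $M$ into such an $\R^I$ would contradict the failure of second countability of $M$.

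For the lower bound in (c) I would invoke the topological weight $w$. For infinite $I$ one has $w(\R^I) = |I|$, and weight never increases under passage to subspaces. On the other hand, if $M$ has $\kappa$ connected components, these form a disjoint family of nonempty open subsets, and any basis of $M$ must contain at least one element in each of them; hence $w(M) \geq \kappa$. Combining, an embedding $M \hookrightarrow \R^I$ forces $|I| \geq \kappa = \conncomp(M)$.

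For the upper bounds in (b) and (c) I would use one common construction. Write $M = \bigsqcup_{\alpha \in A} M_\alpha$ for the decomposition into connected components, so $|A| = \conncomp(M)$. Applying a quantitative version of Theorem~A to each $M_\alpha$, I would obtain $C^r$-embeddings $f_\alpha \colon M_\alpha \to \R^{I_\alpha}$ with $|I_\alpha| \leq 2^{\aleph_0}$. With $I := A \sqcup \bigsqcup_\alpha I_\alpha$ I would then define $F \colon M \to \R^I$ by taking as $\alpha$-coordinate ($\alpha \in A$) the indicator function $\mathbbold{1}_{M_\alpha}$, which is $C^r$ because $M_\alpha$ is clopen, and as $I_\beta$-block on a point $x \in M_\alpha$ the value $f_\alpha(x)$ if $\alpha = \beta$ and zero otherwise. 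The indicator coordinates separate the components and guarantee that $F^{-1}$ is continuous, while smoothness is local and inherited from each $f_\alpha$. Cardinal arithmetic then gives $|I| \leq 2^{\aleph_0}$ in case (b), where $|A| \leq 2^{\aleph_0}$, and $|I| \leq |A| = \conncomp(M)$ in case (c), where the factor $2^{\aleph_0}$ is absorbed, matching the respective lower bounds.

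The main obstacle is the quantitative version of Theorem~A: any \emph{connected} finite-dimensional Hausdorff $C^r$-manifold $N$ embeds into $\R^I$ with $|I| \leq 2^{\aleph_0}$. This should rest on the classical bound that a connected finite-dimensional Hausdorff manifold has weight and cardinality at most the continuum, which in turn controls the number of bump functions and coordinate data entering any construction in the spirit of Theorem~A. Once this refined bound is available, everything else in Theorem~B reduces to the clopen-component gluing above and routine cardinal arithmetic.
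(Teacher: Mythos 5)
Your overall architecture coincides with the paper's: part (a) via Whitney, the $\aleph_1$ lower bound via second countability of $\R^{\aleph_0}$, the lower bound in (c) via a cardinal invariant of $\R^I$ (the paper uses the maximal cardinality of a discrete subset, taking one point per component, together with its Lemma \ref{lem: weakly complete dimension}; your weight argument is an equivalent variant), and the upper bounds via a component-indexed gluing of embeddings of the connected components, which is essentially the paper's map $x\in M_\alpha\mapsto(e_\alpha,\Phi_\alpha(x))\in\R^A\times\R^{2^{\aleph_0}}$.

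The one genuine gap is the step you yourself flag as the main obstacle: that a \emph{connected} finite-dimensional Hausdorff $C^r$-manifold $N$ embeds into $\R^I$ with $\abs{I}\leq 2^{\aleph_0}$. Your proposed justification --- that $N$ has cardinality and weight at most the continuum, which ``controls the number of bump functions'' --- does not suffice as stated. The embedding of Theorem A is indexed by $I=C^r_c(N,\R)$, and knowing only $\abs{N}\leq 2^{\aleph_0}$ gives a priori just $\abs{C^r_c(N,\R)}\leq 2^{2^{\aleph_0}}$; since $N$ need not be separable (the Long Line is not), one cannot reduce to values on a countable dense subset of all of $N$. The paper closes exactly this gap: by a cited theorem $N$ admits an atlas of continuum cardinality; passing to finite unions of chart domains yields a directed cover $(U_j)_{j\in J}$ with $\abs{J}\leq 2^{\aleph_0}$ by \emph{separable} open sets; every $f\in C^r_c(N,\R)$ has compact support, hence $\supp(f)\subseteq U_j$ for a single $j$, so $f$ vanishes off $U_j$ and is determined by its restriction to a countable dense subset $D_j\subseteq U_j$; therefore $\abs{C^r_c(N,\R)}\leq\sum_{j\in J}\abs{\R^{D_j}}\leq 2^{\aleph_0}\cdot 2^{\aleph_0}=2^{\aleph_0}$. (Alternatively one could select, using the weight bound on $N$, a subfamily of only $2^{\aleph_0}$ bump functions that still yields a topological embedding and an immersion, but that too requires an explicit argument.) Without some version of this counting step, the upper bounds in (b) and (c) remain unproved; everything else in your plan is sound.
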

Of course, if the continuum hypothesis $\aleph_1 = 2^{\aleph_0}$ holds, then this theorem gives an exact answer.

Now for something completely different: By a theorem of Kneser (see \cite{MR0101321}), there is a real-analytic ($C^\omega_\R$-) structure on the Long Line. (in fact, there are infinitely many non-equivalent of them (see e.g.~\cite[Satz 1]{MR0113228})). For second countable manifolds, there is an analogue of Whitney's Theorem for real analytic manifolds, stating that every second countable Hausdorff $C^\omega_\R$-manifold can be embedded analytically into a Euclidean space (see e.g.\cite[8.2.3]{MR2975791}).
This rises the question whether we can embed a real analytic Long Line into a space of the form $\R^I$. We answer this question in the negative:

\begin{thmC} \ \\
 Let $M$ be the Long Line $\LongLine$ or the Open Long Ray $\OpenLongRay$ with a $C^\omega_\R$-structure. Then it is not possible to embed $M$ as a $C^\omega_\R$-submanifold into any locally convex topological vector space. In fact, every $C^\omega_\R$-map from $M$ into any locally convex topological vector space is constant.
\end{thmC}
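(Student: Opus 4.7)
The plan is a two-step reduction: first from vector-valued analytic maps to scalar ones via Hahn--Banach, then from scalar real analytic functions on $M$ to constants via the identity theorem combined with the eventual constancy of continuous functions on $\omega_1$.

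For the first step, let $f : M \to E$ be $C^\omega_\R$. Every continuous linear functional $\lambda \in E'$ is itself real analytic, and since the Michal--Bastiani class $C^\omega_\R$ is closed under composition, $\lambda \circ f : M \to \R$ is real analytic. Because $E$ is Hausdorff locally convex, $E'$ separates points (Hahn--Banach), so constancy of all $\lambda \circ f$ forces constancy of $f$. It therefore suffices to prove that every real analytic function $g : M \to \R$ is constant.

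For $M = \OpenLongRay$, consider the \emph{skeleton} $S := \{(\alpha, 0) : 0 < \alpha < \omega_1\}$, a closed subspace of $\OpenLongRay$ that is naturally homeomorphic to $\omega_1 \setminus \{0\}$ in the order topology. The classical fact that every continuous function $\omega_1 \to \R$ is eventually constant --- ultimately based on the countable compactness of $\omega_1$, together with an interlacing argument comparing $\limsup$ and $\liminf$ along countable cofinal sequences and exploiting continuity at limit ordinals --- produces $\alpha_0 < \omega_1$ and $c \in \R$ such that $g(\alpha, 0) = c$ for every $\alpha > \alpha_0$. The set $g^{-1}(c)$ therefore contains uncountably many points and, crucially, possesses accumulation points in $\OpenLongRay$: if $\alpha > \alpha_0$ is a limit ordinal, then $(\alpha, 0)$ is a limit point of $\{(\beta, 0) : \alpha_0 < \beta < \alpha\} \subset g^{-1}(c)$. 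Since $\OpenLongRay$ is a connected real analytic $1$-manifold, the identity theorem for real analytic functions forces $g \equiv c$.

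For $M = \LongLine$, removing the central gluing point $*$ exposes $\LongLine \setminus \{*\}$ as a disjoint union of two copies of $\OpenLongRay$, each a connected real analytic $1$-manifold. The previous paragraph gives constants $c, c'$ on the two halves, and continuity of $g$ at $*$ forces $c = c'$, so $g$ is constant on all of $\LongLine$. The main obstacle is the ``eventually constant'' statement for continuous functions $\omega_1 \to \R$; the rest is a routine combination of standard properties of the Michal--Bastiani calculus and the identity theorem on a connected $1$-dimensional real analytic manifold.
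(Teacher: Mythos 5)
Your proposal is correct and follows essentially the same route as the paper: reduce to scalar-valued maps via Hahn--Banach, invoke eventual constancy of continuous real-valued functions, and conclude with the identity theorem on a connected one-dimensional real analytic manifold. The only (harmless) variation is that you apply eventual constancy to the copy of $\omega_1$ sitting inside $M$ and then use the accumulation-point form of the identity theorem, whereas the paper applies eventual constancy to $M$ itself, so that $\lambda\circ f$ is constant on an open tail.
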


Lastly, we will address the question whether the embeddings we constructed in Theorem A have closed image. It is a well-known (and easy to show) fact that each $C^r$-submanifold of $\R^n$ is $C^r$-diffeomorphic to a \emph{closed} $C^r$-submanifold of $\R^{n+1}$ . Hence, every submanifold of a finite dimensional space can be regarded as a \emph{closed} submanifold of a (possibly bigger) finite dimensional space\footnote{It is also possible to construct the Whitney embedding already in such a way that the image is closed in $\R^{2d}$.}. The question whether this also holds in our setting, i.e.~whether the Long Line is diffeomorphic to a closed submanifold of $\R^I$ is answered to the negative:

\begin{thmD} \ \\ 
 Let $M$ be the Long Line $\LongLine$ or the Open Long Ray $\OpenLongRay$ and let $E$ be any \emph{complete} locally convex vector space. Then $M$ is \emph{not} homeomorphic to a \emph{closed} subset of $E$. In particular, $M$ cannot be a closed submanifold of $E$.
\end{thmD}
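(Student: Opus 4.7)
The plan is to prove the stronger statement that if $\phi: M \to E$ is any continuous map into a complete locally convex space $E$ with $\phi(M)$ closed in $E$, then $\phi(M)$ must be compact. Applied to a homeomorphism $\phi$ of $M$ onto a closed subset, this forces $M$ itself to be compact, contradicting the fact that $\LongLine$ and $\OpenLongRay$ are not compact (they contain, for example, an unbounded closed copy of $\omega_1$).

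The first ingredient is the classical observation that $M$ is countably compact---even sequentially compact. Indeed, for a sequence $s_n = (\alpha_n, t_n)$ in $\OpenLongRay$, the ordinal $\beta := \sup_n \alpha_n$ is countable (a countable supremum of countable ordinals), so every term lies in the closed interval $[(0,0), (\beta+1, 0)]$ of the long ray, which is order-isomorphic and hence homeomorphic to a compact real interval. The sequence therefore admits a convergent subsequence; the same reasoning applies on each of the two copies of $\ClosedLongRay$ making up $\LongLine$.

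Next I would show that $\phi(M)$ is precompact in $E$. For every continuous seminorm $p$ on $E$ let $\hat{E}_p$ denote the Banach space obtained by completing the normed quotient $E/p^{-1}(0)$, with $\pi_p : E \to \hat{E}_p$ the canonical continuous linear map. The composition $\pi_p \circ \phi : M \to \hat{E}_p$ is continuous, so its image is a countably compact subset of a metric space, and is therefore compact, in particular totally bounded in the norm $\bar p$. Covering $\pi_p(\phi(M))$ by finitely many $\bar p$-balls centred at points of the form $\pi_p(\phi(m_i))$ and pulling back gives a finite cover of $\phi(M)$ by $p$-balls of the same radius. Thus $\phi(M)$ is $p$-totally bounded for every continuous seminorm $p$, i.e.~precompact in $E$.

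Finally, since $\phi(M)$ is assumed closed in the complete locally convex space $E$, it inherits completeness, and a complete precompact subset of a Hausdorff locally convex space is compact. Transporting this compactness back through the homeomorphism yields the desired contradiction. The main substance of the argument is the countable compactness of $M$ together with the standard point-set fact that countably compact subsets of metric spaces are compact; the reduction to the Banach space quotients $\hat{E}_p$ is the routine device that converts these ingredients into a statement about $E$.
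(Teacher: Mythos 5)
Your argument works for the Long Line, but it breaks down for the Open Long Ray at the very first step: $\OpenLongRay$ is \emph{not} sequentially compact (nor countably compact). The sequence $s_n=(0,1/n)$ decreases to the deleted boundary point $(0,0)\notin\OpenLongRay$ and has no cluster point in $\OpenLongRay$; it is in fact an infinite closed discrete subset. The phrase ``the same reasoning applies'' hides exactly the case that fails: your computation shows the terms lie in a compact initial segment of the \emph{closed} long ray, but the limit of the convergent subsequence may be the removed point. As a consequence, your ``stronger statement'' is actually false for $M=\OpenLongRay$: the map sending $(0,t)$ to $1/t-1$ for $0<t<1$ and sending every $x\geq(1,0)$ to $0$ is continuous on $\OpenLongRay$ and has image $[0,\infty)$, a closed non-compact subset of the complete locally convex space $\R$. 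So for the Open Long Ray you cannot reach a contradiction by showing that the image of all of $M$ would have to be compact.

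The repair is to localize: run your argument not on $M$ but on the closed tail $A=\setm{x\in\OpenLongRay}{x\geq p}$, which is homeomorphic to $\ClosedLongRay$, hence sequentially compact, closed in $\OpenLongRay$, and not compact. If $h$ were a homeomorphism of $\OpenLongRay$ onto a closed subset of $E$, then $h(A)$ would be closed in $E$ and countably compact, and your precompactness-plus-completeness argument -- which is correct, and is essentially the paper's reduction to Banach factors followed by an appeal to compactness in metric spaces and Tychonoff -- shows that $h(A)$ is compact, contradicting the non-compactness of $A$. This is precisely how the paper proceeds: it isolates the property that every \emph{closed sequentially compact subset} is compact, shows that complete locally convex spaces have it and that it passes to closed subspaces, and observes that both $\LongLine$ and $\OpenLongRay$ violate it ($\LongLine$ via the whole space, $\OpenLongRay$ via the tail $A$). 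Your treatment of the Long Line itself, and your total-boundedness route through the local Banach spaces $\hat E_p$, are correct as written.
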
 

If one allows non-complete locally convex spaces, then there is a way to embed $M$ \emph{topologically} as a closed subset into a non-complete locally convex space (see Remark \ref{rem_noncomplete_emb}). However, this embedding is merely continuous but fails to be $C^r$. It is not known to the author if there is a way to construct a $C^r$-embedding with a closed image in a locally convex space.

\section{Construction of the embedding}
 \begin{setup}
  Whenever we speak of $C^r$-mappings and $C^r$-manifolds, we refer to the locally convex differential calculus by Michal-Bastiani. Details can be found in \cite{MR1911979}, \cite{keller1974}, \cite{MR830252} and \cite{MR2261066}.
   For the special case that a function $f$ is defined on $\Omega\subseteq\R^d$, this is equivalent to the notion that $f$ is $r$-times partially differentiable and that all $\partial^\alpha f$ are continuous (see for example \cite[Appendix A.3]{MR2952176})
  All manifolds are assumed to be Hausdorff but we do not assume that they are connected (and of course we will not assume that they are paracompact or even second countable).
 \end{setup}

%
 \begin{setup}[$C^r$-submanifolds]
  Let $r\in\N\cup\smset{\infty}$.
  Let $M$ be a $C^r$-manifold modeled on a locally convex space $E$ and $F\subseteq E$ be a closed vector subspace. A subset $N\subseteq M$ is called a \emph{$C^r$-submanifold of $M$ modeled on $F$} if for each point $p\in N$ there is a $C^r$-diffeomorphism $\smfunc{\phi}{U_\phi}{V_\phi}$ with $U_\phi\subseteq M$ and $V_\phi\subseteq E$ open such that $\phi(U_\phi\cap N)=V_\phi\cap F$.

  The $C^r$-submanifold $N$ then carries a natural structure of a $C^r$-manifold modeled on the vector space $F$. It should be noted that although $F$ is assumed to be closed in $E$, we do not assume that $N$ is a closed subset of $M$.
 \end{setup}

 \begin{setup}[$C^r$-embeddings]
  Let $M$ and $N$ be locally convex manifolds and let $\smfunc{f}{M}{N}$ be a $C^r$-map. We call $f$ a \emph{$C^r$-embedding} if $f(M)$ is a submanifold of $N$ and $\nnfunc{M}{f(M)}{x}{f(x)}$ is a $C^r$-diffeomorphism. Our goal is to show that for each finite dimensional $M$ there is a set $I$ such that there is a $C^r$-embedding $\smfunc{f}{M}{\R^I}$.
 \end{setup}

 \begin{setup}[Immersions]
  One reason why locally convex differential calculus is more involved than in finite dimensions is that there are at least two different notions of \emph{immersions}: 
  Let $\smfunc{f}{M}{N}$ be a $C^r$-map between locally convex manifolds. For the sake of this article, let us call $f$ a \emph{weak immersion} if the tangent map $\smfunc{T_af}{T_aM}{T_{f(a)}N}$ at each point $a\in M$ is injective. We call $f$ a \emph{strong immersion} if every $a\in M$ has an open neighborhood $U\subseteq M$ such that $\smfunc{f|_U}{U}{N}$ is a $C^r$-embedding.
 \end{setup}
 
 \begin{lem}[Immersion Lemma]										\label{lem: immersion_lemma}
  For a $C^r$-map $\smfunc{f}{M}{N}$ on a \emph{finite dimensional} manifold $M$ and a locally convex manifold $N$, the following are equivalent:
  \begin{itemize}
   \item [(a)] $f$ is a weak immersion.
   \item [(b)] $f$ is a strong immersion.
  \end{itemize}
 \end{lem}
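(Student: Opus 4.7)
The direction (b) $\Rightarrow$ (a) is essentially a triviality: if $f$ restricted to some open neighborhood $U$ of $a$ is a $C^r$-embedding, then $f|_U$ is a $C^r$-diffeomorphism onto a $C^r$-submanifold of $N$, so $T_a f$ factors as a linear isomorphism onto the tangent space of that submanifold and is in particular injective. The content of the lemma is the converse.

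For (a) $\Rightarrow$ (b), fix $a \in M$ and work in charts on both sides. Without loss of generality we may replace $f$ by a $C^r$-map $\smfunc{g}{U}{E}$ defined on an open neighborhood $U$ of $0 \in \R^d$, with $g(0) = 0$, where $E$ is the model space of $N$, and we assume $dg(0)\colon \R^d \to E$ is injective. Set $V := dg(0)(\R^d)$. Since $E$ is Hausdorff, the finite-dimensional subspace $V$ is closed, and by the Hahn--Banach theorem we can find continuous linear functionals $\lambda_1,\ldots,\lambda_d \in E'$ dual to a basis of $V$; these assemble to a continuous linear projection $\smfunc{p_V}{E}{V}$, giving a topological decomposition $E = V \oplus W$ with $W := \ker p_V$ closed. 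Writing $g_V := p_V \circ g$ and $g_W := (\id_E - p_V) \circ g$, the differential $dg_V(0)\colon \R^d \to V$ equals $p_V \circ dg(0)$, which is a bijection between two $d$-dimensional spaces, hence a linear isomorphism. The classical finite-dimensional inverse function theorem yields a local $C^r$-inverse $\smfunc{\alpha}{V'}{U'}$ of $g_V$ on open neighborhoods of $0$ in $V$ and $\R^d$ respectively.

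Now define
\[
 \smfunc{\phi}{V' \oplus W}{V' \oplus W},\qquad \phi(v,w) := \bigl(v,\ w - g_W(\alpha(v))\bigr).
\]
This map is $C^r$ with $C^r$-inverse $(v,w) \mapsto (v,\ w + g_W(\alpha(v)))$, so $\phi$ is a $C^r$-diffeomorphism on an open neighborhood of $0$ in $E$. A direct computation yields
\[
 \phi(g(x)) = \bigl(g_V(x),\ g_W(x) - g_W(\alpha(g_V(x)))\bigr) = \bigl(g_V(x),\ 0\bigr)\qquad \hbox{for } x \in U',
\]
so $g(U') = \phi^{-1}(V' \times \smset{0})$ is a $C^r$-submanifold of the chart domain, modeled on $V$, and $g|_{U'}\colon U' \to g(U')$ is a $C^r$-diffeomorphism (it is the composition of the diffeomorphism $g_V|_{U'}\colon U' \to V'$ with the inverse of $\phi$ restricted to the slice). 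Transporting back through the charts shows that $a$ has an open neighborhood on which $f$ is a $C^r$-embedding.

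The main obstacle is that in the Michal--Bastiani framework there is no inverse function theorem available for general locally convex target spaces, so one cannot simply apply an inverse function theorem to the map $\Phi(x,w) := g(x) + (0,w)$ on $\R^d \oplus W$ even though its differential at the origin is a topological isomorphism onto $E$. The trick above is to sidestep this by invoking the inverse function theorem only for the finite-dimensional map $g_V\colon \R^d \to V$ and then writing down the straightening diffeomorphism $\phi$ by hand; this uses only that finite-dimensional subspaces of a Hausdorff locally convex space are closed and topologically complemented.
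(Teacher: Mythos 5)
Your proof is correct and follows essentially the same route as the paper: both decompose the target as $E=V\oplus W$ with $V=\im T_af$ using the complementability of finite-dimensional subspaces, apply the finite-dimensional inverse function theorem to $\pi_V\circ f$, and identify the local image as the graph of $\pi_W\circ f\circ(\pi_V\circ f)^{-1}$. The only difference is cosmetic: the paper simply invokes that such a graph is a submanifold, whereas you write out the straightening diffeomorphism $\phi$ explicitly.
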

 \begin{proof}
  Since both properties are local, we may assume that $\smfunc{f}{\Omega}{F}$ is a $C^r$-map, where $M=\Omega$ is an open subset of $\R^d$, while $N=F$ is a (Hausdorff) locally convex topological vector space.

  Since the implication (b) $\Rightarrow$ (a) holds trivially, even without the domain being finite dimensional, we will show (a)$\Rightarrow$(b). To this end, let $a\in \Omega$ be fixed. We assume that $\smfunc{T_af}{\R^d}{F}$ is injective. This means that $V:=\im T_af$ is a $d$-dimensional vector subspace of $F$. Since finite dimensional vector subspaces in locally convex spaces are always complemented (see e.g.~\cite[Corollary 2 in Chapter 7.2]{MR632257}), we may assume that $F=V\oplus W$ with a closed vector subspace $W\subseteq F$. We obtain the projections $\smfunc{\pi_V}{F}{V}$ and $\smfunc{\pi_W}{F}{W}$. It is easy to check that the tangent map of $\smfunc{\pi_V\circ f}{\Omega}{V}$ is an invertible linear map between the $d$-dimensional real vector spaces $\R^d$ and $V$. Hence, by the usual Inverse Function Theorem for $C^r$-maps, there exists a small neighborhood $\Omega'\subseteq \Omega$ of $a$ such that $\pi_V\circ f$ maps $\Omega'$ diffeomorphic onto $\Omega_V:=(\pi_V\circ f)(\Omega')$. The inverse map will be denoted by $\smfunc{g}{\Omega_V}{\Omega'}$.

  Now, the function $\smfunc{\phi:=\pi_W\circ f \circ g}{\Omega_V}{W}$ is a $C^r$-map and hence, its graph
  \[
   G:=\setm{(v,w)\in V\oplus W}{v\in\Omega_V \hbox{ and } w=\phi(v)}
  \]
  is a submanifold of $V\oplus W = E$. It is now easy to check that the image of $f|_{\Omega'}$ is the set $G$ and that the map is a $C^r$-diffeomorphism onto its image.
 \end{proof}

 \begin{rem}
  While this Lemma holds for finite dimensional $M$, it has to be said that statements like these fail to hold if the domain is infinite dimensional, in particular beyond Banach space theory due to the lack of an Inverse Function Theorem.

  This main essence of this last lemma (with slightly different definitions and vocabulary) can also be found in \cite{HelgeSubmersionsAndImmersions} which provides a good overview of immersions and submersions in infinite dimensional locally convex spaces.
 \end{rem}

 \begin{prop}												\label{prop: topological immersion}
  Let $\smfunc{f}{M}{N}$ be a map between locally convex $C^r$-manifolds $M$ and $N$. Then $f$ is a $C^r$-embedding if and only if $f$ is a topological embedding and a strong $C^r$-immersion.
 \end{prop}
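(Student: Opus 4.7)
The plan is to prove the two implications separately. The forward direction is largely a matter of unpacking definitions: if $f$ is a $C^r$-embedding, then $\smfunc{f}{M}{f(M)}$ is a $C^r$-diffeomorphism onto a submanifold of $N$, so it is in particular a homeomorphism onto $f(M)$ with its subspace topology (the submanifold charts of $N$ restrict to charts of $f(M)$ whose induced topology agrees with the subspace topology), hence $\smfunc{f}{M}{N}$ is a topological embedding. For strong immersion, fix $a\in M$, choose a submanifold chart of $N$ around $f(a)$ with underlying open set $U_\phi$, and set $U:=f^{-1}(U_\phi)$; the restriction $f|_U$ maps onto the open subset $f(U)\subseteq f(M)$, and the restriction of a diffeomorphism to an open set remains a diffeomorphism, so $f|_U$ is a $C^r$-embedding.

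For the backward direction, assume $f$ is a topological embedding and a strong $C^r$-immersion. For each $a\in M$ pick an open neighborhood $U_a\subseteq M$ such that $\smfunc{f|_{U_a}}{U_a}{N}$ is a $C^r$-embedding; then $f(U_a)$ is a $C^r$-submanifold of $N$ modeled on some closed subspace $F\subseteq E$. The topological-embedding hypothesis supplies, for each $a$, an open $V_a\subseteq N$ with $f(U_a)=V_a\cap f(M)$, because $\smfunc{f}{M}{f(M)}$ is a homeomorphism and $U_a$ is open in $M$. This is the crucial extra information that will let the local submanifold structures on the $f(U_a)$ assemble into a submanifold structure on the full image $f(M)$.

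Concretely, given a submanifold chart $\smfunc{\phi}{U_\phi}{V_\phi}$ of $N$ around $f(a)$ witnessing $f(U_a)$ as a submanifold, I shrink to $U'_\phi:=U_\phi\cap V_a$ and $V'_\phi:=\phi(U'_\phi)$ and compute
\[
U'_\phi\cap f(M) \;=\; U_\phi\cap V_a\cap f(M) \;=\; U_\phi\cap f(U_a) \;=\; U'_\phi\cap f(U_a),
\]
where the last equality uses $f(U_a)\subseteq V_a$. Applying $\phi$ then gives $\phi(U'_\phi\cap f(M))=V'_\phi\cap F$, which is precisely a submanifold chart of $f(M)$ in $N$. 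Hence $f(M)$ is a $C^r$-submanifold of $N$. By construction the smooth structure on $f(U_a)$ viewed as an open subset of $f(M)$ coincides with the one it carries as a submanifold of $N$ coming from strong immersion, so $\smfunc{f|_{U_a}}{U_a}{f(U_a)}$ is a $C^r$-diffeomorphism. These local diffeomorphisms glue (as $f\colon M\to f(M)$ is already a bijection, indeed a homeomorphism) to show that $\smfunc{f}{M}{f(M)}$ is a $C^r$-diffeomorphism, which is the remaining ingredient of the definition of $C^r$-embedding.

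The main obstacle I expect is exactly this gluing step: strong immersion alone produces submanifold charts only for each $f(U_a)$, and a priori points of $f(M)\setminus f(U_a)$ could meet those chart neighborhoods and violate the submanifold equation $\phi(U_\phi\cap f(M))=V_\phi\cap F$. The topological-embedding hypothesis is used precisely to exclude this, by producing the open set $V_a\subseteq N$ that isolates $f(U_a)$ inside $f(M)$; shrinking the chart to $U_\phi\cap V_a$ is then what makes the argument go through. A secondary, mostly notational, issue is that the closed model subspaces attached to different $U_a$ should be taken to be the same $F\subseteq E$; this can be arranged by a linear change of coordinates in the charts of $N$ using that all these model subspaces are isomorphic to the model of $M$ via the injective tangent maps.
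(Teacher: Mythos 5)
Your proof is correct and follows essentially the same route as the paper: the forward direction is definition-unpacking, and the backward direction hinges on exactly the observation the paper makes, namely that the topological-embedding hypothesis yields an open $V_a\subseteq N$ with $f(U_a)=V_a\cap f(M)$, so that intersecting the local submanifold chart with $V_a$ promotes it to a submanifold chart for all of $f(M)$. You simply carry out the routine verifications (the chart computation, the gluing of local diffeomorphisms, the common model subspace) that the paper leaves implicit.
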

 \begin{proof}
  Having the right (strong) definition of \emph{immersion}, this proposition is easy to show:
  Let $a\in M$. Then there exists an open neighborhood $U\subseteq M$ of $a$ such that $\smfunc{f|_U}{U}{N}$ is a $C^r$-embedding. Since $f$ is a topological embedding, the image of $U$ under $f$ is open in $f(M)$,i.e.~there exists an open neighborhood $V$ of $f(a)$ in $V$ such that $f(U)=V\cap f(M)$.  
 \end{proof}
 
 Now, we are ready to show that every finite dimensional Hausdorff $C^r$-manifold admits a $C^r$-embedding into a locally convex space of the type $\R^I$ for an index set $I$:
 \begin{proof}[Proof of Theorem A]
  Let $M$ be a finite dimensional $C^r$-manifold and let
  \[
   I:=C_c^r(M,\R)
  \]
  denote the set of all compactly supported $C^r$-functions on $M$. We define the following map
  \[
   \func{\Phi}{M}{\R^I}{x}{(f(x))_{f\in I}.}
  \]
  This map is $C^r$ since every component is $C^r$, in particular, it is continuous.

  Next, we show that it is a topological embedding. Let $a\in M$ be a point and let $\seqa{a_\alpha}$ be a net in $M$ with the property that $\seqa{\Phi(a_\alpha)}$ converges to $\Phi(a)$. We will show that $\seqa{a_\alpha}$ converges to $a$. To this end, let $U\subseteq M$ be an open neighborhood of $a$. It is possible to construct a function $f\in I$ such that $\supp(f)\subseteq U$ and $f(a)=1$. Since $\seqa{\Phi(a_\alpha)}$ converges to $\Phi(a)$ in the product space $\R^I$ and since projection onto the $f$-th component is continuous, we obtain that $\seqa{f(a_\alpha)}$ converges in $\R$ to $f(a)=1$. Hence, there is an $\alpha_0$ such that $f(a_\alpha)>0$ for all $\alpha\geq\alpha_0$. This implies that $a_\alpha\in U$ for all $\alpha\geq\alpha_0$ and hence, $\Phi$ is a topological embedding.

  It remains to show that $\Phi$ satisfies part (a) of Lemma \ref{lem: immersion_lemma}. Then, together with Proposition \ref{prop: topological immersion}, the assertion follows.

  To this end, let $a\in M$ be fixed. It remains to show that the linear map $\smfunc{T_a\Phi}{T_aM}{\R^I}$ is injective. Since this is a local property, we may assume that $M$ is an open $0$-neighborhood in $\R^d$ and that $a=0$.

  We obtain the following formula for the linear map:
  \[
   \func{T_a\Phi}{\R^d}{\R^I}{v}{(T_af(v))_{f\in I}.}
  \]
  Let $v\in \ker T_a\Phi$ and fix a linear map $\smfunc{\lambda}{\R^d}{\R}$. We define the following function $f_0\in I$ via
  \[
   \func{f_0}{M}{\R}{x}{\theta(x)\cdot \lambda(x),}
  \]
  where $\smfunc{\theta}{\R^d}{\R}$ is a suitable $C^r$-function with compact support in $M$ and the property that $\theta(x)=1$ for all $x$ in a small neighborhood of $0$.
  Since $v\in \ker T_a\Phi(v)$, this implies that $v\in \ker T_af$ for all $f\in I$. In particular, we have that $T_af_0(v)=0$. But since $f_0$ is equal to $\lambda$ in a neighborhood of $0$, this implies that $\lambda(v)=0$. Since $\lambda$ was arbitrary, this implies that $v=0$. This finishes the proof.
 \end{proof}

 \section{The embedding dimension}
 In this section we will give a proof of Theorem B stated in the introduction.

 \begin{lem}[Weight = Weakly Complete Dimension]								\label{lem: weakly complete dimension}
  Let $\R^I$ be a weakly complete vector space with $\abs{I}$ infinite. Then the cardinality of $I$ is a topological invariant of the space, i.e.~it can be computed using only the topology of $\R^I$ and not the vector space structure:
  \begin{itemize}
   \item [(a)] The cardinal $\abs{I}$ is the minimal cardinality of a basis of the topology of $\R^I$, i.e.~$\abs{I}$ is the \emph{weight} of the topological space $\R^I$.
   \item [(b)] The cardinal $\abs{I}$ is the maximal cardinality of a discrete subset of the space $\R^I$.
  \end{itemize}
 \end{lem}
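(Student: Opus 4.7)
The plan is to prove (a) by two matching cardinality bounds, and to derive (b) from (a) using the fact that an infinite discrete space has weight equal to its own cardinality. The arguments amount to careful bookkeeping inside the product topology; I do not anticipate a real obstacle, only the cardinal arithmetic in the character argument below, which hinges on $|I|$ being infinite.

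For (a), the upper bound $w(\R^I) \le |I|$ comes from the canonical product basis: the collection of all sets $\bigcap_{i \in F} \pi_i^{-1}(J_i)$, where $F \subseteq I$ is finite and each $J_i$ is drawn from a fixed countable basis of $\R$, has cardinality $|I|^{<\omega}\cdot\aleph_0 = |I|$. For the reverse inequality I would argue that the character of $\R^I$ at $0$, which lower-bounds the weight, is already $|I|$. Suppose for contradiction that $0$ admits a neighborhood basis $\{V_\alpha : \alpha < \kappa\}$ with $\kappa < |I|$. Each $V_\alpha$ contains a basic open neighborhood depending on only finitely many coordinates $F_\alpha \subseteq I$, and $F := \bigcup_\alpha F_\alpha$ has cardinality $\max(\kappa,\aleph_0) < |I|$. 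Picking $i_0 \in I \setminus F$, the neighborhood $\pi_{i_0}^{-1}((-1,1))$ cannot contain any $V_\alpha$, since each such $V_\alpha$ is unrestricted in coordinate $i_0$ and therefore contains points whose $i_0$-coordinate lies outside $(-1,1)$: contradiction.

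For (b), the lower bound is exhibited by the standard unit vectors $\{e_i : i \in I\}$; the open set $\pi_i^{-1}((1/2, 3/2))$ separates $e_i$ from every other $e_j$, so this family is discrete in $\R^I$ and has cardinality $|I|$. For the upper bound, let $D \subseteq \R^I$ be any discrete subspace: by monotonicity of weight under subspaces and by (a), the weight of $D$ is bounded by $|I|$; but an infinite discrete space has weight equal to its cardinality, since every basis must contain each singleton. Hence $|D| \le |I|$, completing both parts.
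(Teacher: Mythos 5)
Your proof is correct, and it reaches the two statements by a genuinely different route than the paper. The paper never proves the lower bound $\abs{I}\leq w(\R^I)$ directly; instead it closes a cycle of three inequalities: (i) any discrete subset of a space with a basis of cardinality $\alpha$ has at most $\alpha$ elements, so the maximal discrete cardinality is at most the weight; (ii) the canonical product basis gives weight at most $\abs{I}$; (iii) the unit vectors form a discrete set of cardinality $\abs{I}$. Chaining these forces all three cardinals to coincide, so (a) and (b) drop out simultaneously without any local analysis. You instead prove the weight lower bound head-on, by showing the character of $\R^I$ at $0$ is already $\abs{I}$ (no neighborhood basis of size $\kappa<\abs{I}$ can refine $\pi_{i_0}^{-1}((-1,1))$ for a coordinate $i_0$ avoided by all the finite supports), and then deduce (b) from (a) via monotonicity of weight under subspaces. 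Your argument is longer but yields strictly more: it shows the \emph{local} weight (character) at every point equals $\abs{I}$, which the paper's global cycle does not give. Two small points of hygiene: the equality $\abs{F}=\max(\kappa,\aleph_0)$ should be an inequality $\abs{F}\leq\max(\kappa,\aleph_0)$, and in the degenerate case $\kappa$ finite this bound is $\aleph_0$, which is not $<\abs{I}$ when $\abs{I}=\aleph_0$; but then $F$ is in fact finite, so the conclusion $\abs{F}<\abs{I}$ still holds. Also, it is the basic neighborhood \emph{inside} $V_\alpha$ that is unrestricted in coordinate $i_0$, not $V_\alpha$ itself, though this suffices for the contradiction since $V_\alpha$ contains it. Neither point is a genuine gap.
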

 \begin{proof}
  If a topological space has a topological basis of cardinality at most $\alpha$. Then each subset has the same property. In particular, each discrete subset has a topological basis of cardinality at most $\alpha$ which implies that the discrete subset itself has at most $\alpha$ many elements. This shows that the maximal cardinality of a discrete subset is less than or equal to the minimal cardinality of a basis.

  The product topology on $\R^I$ has a basis of the topology of $\abs{I}$ many sets (using that $I$ is infinite). This shows that the minimal cardinality of a basis is bounded above by $\abs{I}$.

  Lastly, the set $\setm{e_i}{i\in I}$ of unit vectors in $\R^I$ is discrete, showing that $I$ is less than or equal to the maximal cardinality of a discrete subset. Putting these arguments together, the claim follows.
 \end{proof}

 \begin{rem}
  In the case that $\abs{I}=d$ is finite, the weakly complete dimension $d$ of $\R^d$ is no longer equal to the weight of the spaces $\R^d$. However, $d$ is still uniquely determined by the topology of $\R^d$ by the invariance of dimension from algebraic topology (see e.g.~\cite[Theorem 2.26]{MR1867354}). However, we will not need this fact here.
 \end{rem}

 We will start with a lemma which can be found in \cite[Theorem (i)]{MR710241}
 \begin{lem}								\label{lem: continuum atlas}
  Let $M$ be a finite dimensional topological \emph{connected} Hausdorff manifold. Then $M$ admits an atlas of continuum cardinality.
 \end{lem}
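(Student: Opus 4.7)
The plan is to reduce the statement to the cardinality bound $|M|\leq\mathfrak c:=2^{\aleph_0}$, since once this is established, choosing an arbitrary chart around each point of $M$ produces an atlas indexed by the underlying set $M$ itself, hence of cardinality at most $\mathfrak c$. So the real task is to prove $|M|\leq\mathfrak c$ for any connected Hausdorff manifold $M$.

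To do this, I would fix a base point $p_0\in M$ and, for every $p\in M$, a chart domain $U_p\ni p$ homeomorphic to an open subset of $\R^d$; in particular each such $U_p$ has cardinality $\mathfrak c$. Using these, I would build an increasing transfinite chain by
\[
A_0=\{p_0\},\quad A_{\alpha+1}=A_\alpha\cup\bigcup_{q\in\overline{A_\alpha}}U_q,\quad A_\lambda=\bigcup_{\alpha<\lambda}A_\alpha \text{ at limits},
\]
and put $A:=A_{\omega_1}$. For every $\alpha\geq 1$ the set $A_\alpha$ is a union of chart domains, so $A$ is open. For closedness: if $q\in\overline A$, the first-countability of $M$ (inherited from a chart at $q$) yields a sequence $(q_n)\subseteq A$ with $q_n\to q$; writing $q_n\in A_{\alpha_n}$ with $\alpha_n<\omega_1$ and using that $\omega_1$ has uncountable cofinality, the supremum $\alpha^\ast:=\sup_n\alpha_n$ is still countable, so $(q_n)\subseteq A_{\alpha^\ast}$ and $q\in\overline{A_{\alpha^\ast}}\subseteq A_{\alpha^\ast+1}\subseteq A$. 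Thus $A$ is clopen, nonempty, and by connectedness of $M$ equals $M$.

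The cardinality bound then follows by transfinite induction, using as the workhorse the inequality $|\overline B|\leq|B|^{\aleph_0}$ valid in first-countable spaces (closures are sequential). If $|A_\alpha|\leq\mathfrak c$, then $|\overline{A_\alpha}|\leq\mathfrak c^{\aleph_0}=\mathfrak c$, giving $|A_{\alpha+1}|\leq|\overline{A_\alpha}|\cdot\mathfrak c=\mathfrak c$; at countable limits $|A_\lambda|\leq\aleph_0\cdot\mathfrak c=\mathfrak c$; and finally $|A_{\omega_1}|\leq\aleph_1\cdot\mathfrak c=\mathfrak c$ because $\aleph_1\leq\mathfrak c$. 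The main obstacle in this plan is to coordinate closure with the recursion so that the procedure actually terminates: using only the naive forward relation ``$q\in U_p$'' one produces an open set whose boundary is uncontrolled. Inserting the closure into each step of the recursion repairs this, but forces the induction past countable stages; it is exactly the combination of first-countability and the uncountable cofinality of $\omega_1$ that causes the argument to close off at stage $\omega_1$ without the cardinality exceeding $\mathfrak c$.
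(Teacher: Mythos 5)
Your proof is correct, but it is worth pointing out that the paper does not actually prove this lemma: it is quoted from the literature (\cite[Theorem (i)]{MR710241}) and used as a black box. What you have written is therefore a self-contained proof of the cited fact, and it holds up. The reduction to the cardinality bound $\abs{M}\leq 2^{\aleph_0}$ is legitimate, since one topological chart per point then yields an atlas of cardinality at most $2^{\aleph_0}$, which is all the paper ever uses (``continuum cardinality'' is invoked later only as $\abs{\cA}\leq 2^{\aleph_0}$). The transfinite recursion is sound: each $A_{\alpha+1}$ coincides with $\bigcup_{q\in\overline{A_\alpha}}U_q$ and is open; the clopenness of $A_{\omega_1}$ follows, exactly as you argue, from first countability of $M$ (inherited from the charts) together with the uncountable cofinality of $\omega_1$; and connectedness then forces $A_{\omega_1}=M$. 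Two small points deserve to be made explicit. First, the inequality $\abs{\overline B}\leq\abs{B}^{\aleph_0}$ uses not only first countability (so that $\overline B$ is the sequential closure of $B$) but also the Hausdorff hypothesis: it is the uniqueness of sequential limits that makes ``take the limit'' a surjection from a subset of $B^{\mathbb N}$ onto $\overline B$. Second, the assertion that each $U_p$ has cardinality exactly $2^{\aleph_0}$ fails for $d=0$ (where the lemma is trivial anyway); all the induction needs is $\abs{U_p}\leq 2^{\aleph_0}$. Neither point is a gap, and the argument is a clean, elementary replacement for the external reference.
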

 Using this lemma, we can easily proof the following:
 \begin{lem}								\label{lem: continuum seperable cover}
  Let $M$ be a finite dimensional topological connected Hausdorff manifold. Then $M$ admits an open cover $\left( U_j \right)_{j\in J}$ which is stable under finite unions and such that each $U_j$ is separable and such that  $\abs{J}\leq 2^{\aleph_0}$.
 \end{lem}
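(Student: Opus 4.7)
The plan is to derive this essentially as a bookkeeping corollary of the previous lemma (Lemma~\ref{lem: continuum atlas}). Since $M$ is a finite dimensional connected Hausdorff manifold, we obtain an atlas $(\varphi_i \colon U_i \to V_i)_{i \in I}$ with $\abs{I} \leq 2^{\aleph_0}$. In particular $(U_i)_{i\in I}$ is already an open cover of $M$, and each $U_i$ is homeomorphic to an open subset $V_i$ of $\R^d$, hence second countable, hence separable.

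To upgrade to a cover stable under finite unions, I would let $J$ be the collection of all non-empty finite subsets of $I$, and for $j = \{i_1, \ldots, i_n\} \in J$ set
\[
 U_j \coloneq U_{i_1} \cup \cdots \cup U_{i_n}.
\]
The family $(U_j)_{j \in J}$ still covers $M$ (the singletons in $J$ recover the original atlas domains) and is closed under finite unions by construction. Moreover, a finite union of separable spaces is separable, so every $U_j$ is separable.

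The only remaining point is the cardinality bound on $J$. The set of finite subsets of a set of cardinality at most $2^{\aleph_0}$ has cardinality at most $2^{\aleph_0}$, since
\[
 \abs{J} \leq \sum_{n \in \N} \abs{I}^n \leq \aleph_0 \cdot (2^{\aleph_0})^{\aleph_0} = 2^{\aleph_0}.
\]
This gives $\abs{J} \leq 2^{\aleph_0}$ and completes the argument. I do not expect any serious obstacle here; the content of the lemma is really Lemma~\ref{lem: continuum atlas} together with elementary cardinal arithmetic and the fact that separability is preserved under finite unions.
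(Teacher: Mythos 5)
Your proof is correct and follows essentially the same route as the paper: both take the continuum-cardinality atlas from Lemma~\ref{lem: continuum atlas}, pass to all finite unions of chart domains, and observe that separability survives finite unions while the cardinality bound survives passing to finite subsets. The cardinal arithmetic in your final display is valid, so there is nothing to add.
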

 \begin{proof}
  By Lemma \ref{lem: continuum atlas} we know that $M$ has an atlas $\cA$ with $\abs{\cA}\leq 2^{\aleph_0}$. Every chart domain of $\cA$ is homeomorphic to a subset of $\R^d$ and hence separable. Unfortunately, the union of two chart domains is in general not a chart domain. Hence, we consider all finite unions of chart domains. The finite union of open separable sets is open and separable. If the cardinality of the atlas is infinite, it will not increase by allowing finite unions of elements. Hence, it will still be bounded above by the continuum.
 \end{proof}
 We will now give the proof of Theorem B:
 \begin{proof}[Proof of Theorem B]
  Part (a) is just the classical Theorem of Whitney for $C^r$-maps stated in the introduction.

  For the proof of part (b), assume that $M$ is not second countable. Since $\K^{\aleph_0}\cong\K^\N$ is a separable \Frechet space it is second countable. So, $M$ cannot be homeomorphic to a subset of $\K^{\aleph_0}$. Hence, $\embdim(M)\geq \aleph_1$.

  Assume now that $\conncomp(M)=1$, i.e.~$M$ is connected. Let $\left(U_j\right)_{j\in J}$ be the open cover from Lemma \ref{lem: continuum seperable cover}.
  For each $j\in J$, let $E_j$ be the space of all $f\in C^r_c(M,\R)$ such that $\supp(f)\subseteq U_j$. Every support of a function $f\in C^r_c(M,\R)$ is compact and hence can be covered by finitely many $U_j$. Since the system $(U_j)_j$ is directed, we may conclude that for each $f\in C^r_c(M,\R)$ there is a $j\in J$ such that $\supp(f)\subseteq U_j$, i.e.~
  \[
   C^r_c(M,\R) = \bigcup_{j\in J} E_j.
  \]
  Now, every $U_j$ is separable, i.e.~there is a dense countable set $D_j\subseteq U_j$. Each function $f\in E_j$ is in particular continuous and hence uniquely determined by its values on the dense subset $U_j$, yielding an injective map
  \[
   \nnfunc{E_j}{\R^{D_j}}{f}{f|_{D_j}}
  \]
  In the proof of Theorem A, we saw that $M$ can be embedded in $\R^I$ with $I:=C^r_c(M,\R)$. This allows us to estimate the embedding dimension of $M$  as follows:
  \begin{align*}
   \embdim(M)\leq \abs{I}	  & = 		\abs{C^r_c(M,\R)}
				\\& = 		\abs{\bigcup_{j\in J} E_j}
				\\& \leq 	\sum_{j\in J} \abs{E_j} 
				\\& \leq 	\sum_{j\in J} \abs{\R^{D_j}}
				\\& =    	\sum_{j\in J} \abs{\R}^{\abs{D_j}}
				\\& \leq    	\sum_{j\in J} \left( 2^{\aleph_0}\right)^{\aleph_0}
				\\& =    	\sum_{j\in J} 2^{\aleph_0}
				\\& =    	\abs{J} \cdot 2^{\aleph_0}
				\\& \leq    	2^{\aleph_0} \cdot 2^{\aleph_0}
				\\& =    	2^{\aleph_0}.
  \end{align*}
  This finished the proof for the case that $\conncomp(M)=1$.

  Now, for case $\conncomp(M)\geq1$:
  Let $\left( M_\alpha\right)_{\alpha\in A}$ be the family of connected components of $M$. By the preceding calculation, we know that each $M_\alpha$ admits a $C^r$-embedding
  \[
   \smfunc{\Phi_\alpha}{M_\alpha}{\R^{2^{\aleph_0}}.}
  \]
  For each $\alpha\in A$, we define the function
  \[
   \func{e_\alpha}{A}{\R}{\beta}{\delta_{\alpha,\beta}:=\caseelse{1}{\beta=\alpha}{0}.}
  \]
  As $e_\alpha$ is a function from the index set $A$ to $\R$, we have that $e_\alpha$ belongs to the weakly complete vector space $\R^A$. It is easy to see that $\setm{e_\alpha}{\alpha\in A}$ is a discrete subset of $\R^A$.

  Now, we are able to define the embedding of $M$:
  \[
   \Func{\Phi}{M}{\R^A \times \R^{2^{\aleph_0}}}{x \in M_\alpha}{\left(e_\alpha, \Phi_{\alpha}(x)\right).}
  \]
  Is it straightforward to check that this is a $C^r$-embedding. Using this embedding, we obtain an upper bound for the embedding dimension:
  \[
   \embdim(M) \leq \abs{A}\cdot 2^{\aleph_0} = \conncomp(m) \cdot 2^{\aleph_0} = \max(\conncomp(A) , 2^{\aleph_0}),
  \]
  where the last equality used the well-known fact in cardinal arithmetic that the product of two cardinals is equal to the maximum if both are nonzero and at least one of them is infinite.

  It remains to show that $\embdim(M)\geq \conncomp(M)$. To this end, we chose from each connected component $M_\alpha$ one element $x_\alpha \in M_\alpha$. Then it is easy to see that the set $\setm{x_\alpha}{\alpha\in A}$ is discrete in $M$. This implies that $\R^{\embdim(M)}$ has a discrete subset of cardinality $\abs{A}=\conncomp(M)$. By Lemma \ref{lem: weakly complete dimension}, the cardinality of a discrete subset of a weakly complete space is always bounded above by the weakly complete dimension of the surrounding space. Hence $\conncomp(M)\leq \embdim(M)$. This finishes the proof of Theorem B.
 \end{proof}

 \section{Analytic embeddings}
  In this section we will give a proof of Theorem C stated in the introduction. To this end, let $M$ be either the Long Line or the Open Long Ray, together with one of the $C^\omega_\R$-structures on it. Let $E$ be any locally convex space and consider a $C^\omega_\R$-map $\smfunc{f}{M}{E}$. We will show that $f$ is constant. To this end, let $\smfunc{\lambda}{E}{\R}$ be any continuous linear functional on $E$. Continuous linear maps are always analytic, so are compositions of real analytic maps (see \cite[Proposition 2.8]{MR1911979}). This implies that $\smfunc{\lambda\circ f}{M}{\R}$ is a $C^\omega_\R$-function on $M$.
  However, a well-known fact about the Long Line (and the Open Long Ray) is that every continuous function becomes eventually constant (see e.g..\cite[Theorem 7.7]{MR1164707}). So, from a point onwards, $\lambda\circ f$ will be constant and by the Identity Theorem for analytic functions (and the fact that $M$ is connected), this implies that $\lambda\circ f$ is globally constant on $M$.

Since the functional $\lambda$ was arbitrary and by Hahn-Banach, the continuous linear functionals separate the points of $E$, it follows that $\smfunc{f}{M}{E}$ is constant. So in particular, $f$ cannot be an embedding.

 \newcommand{\PP}{(\boxtimes)}

 \section{Closed embeddings}
  In this section we will give a proof of Theorem D stated in the introduction. Only for internal use in this article, we will use the following terminology:
  \begin{defn}
   A topological space is called a $\PP$-space if every closed sequentially compact subset is compact.
  \end{defn}
  Clearly, a closed subset of a $\PP$-space is again $\PP$.
  \begin{lem}
   The Long Line and the Open Long Ray are not $\PP$-spaces.
  \end{lem}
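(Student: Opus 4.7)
The plan is, in each of $\OpenLongRay$ and $\LongLine$, to exhibit one concrete subset that is closed in the ambient manifold, sequentially compact, but not compact. A natural candidate is a ``tail'' of the closed long ray, chosen so as to avoid the deleted or glued basepoint.

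First I would put
\[
 A := \setm{(\alpha,t)\in\omega_1\times[0,1[}{\alpha\geq 1}
\]
and verify that $A$ is closed in $\ClosedLongRay$ (its complement there is the open initial piece $\smset{0}\times[0,1[$) and, since it avoids $(0,0)$, also closed in $\OpenLongRay = \ClosedLongRay\setminus\smset{(0,0)}$. For $\LongLine$, viewed as two copies of $\ClosedLongRay$ glued along their boundaries, I would take the analogous tail inside one of the two copies; it avoids the glueing point and is therefore closed in $\LongLine$.

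For sequential compactness of $A$, the decisive observation is that if $(\alpha_n,t_n)_{n\in\N}$ is a sequence in $A$, then $\setm{\alpha_n}{n\in\N}$ is a countable subset of $\omega_1$ and hence has a supremum $\beta<\omega_1$, so the entire sequence lies in the initial piece $[(1,0),(\beta+1,0)]\subseteq A$. Being a connected, separable, linearly ordered space (with the order topology) that possesses both a least and a greatest element, this piece is order-homeomorphic to $[0,1]$; in particular it is compact metrizable, so a convergent subsequence exists, and its limit remains in $A$ because $A$ is closed. For non-compactness, I would exhibit the open cover $\setm{A\cap[(0,0),(\gamma,0))}{\gamma<\omega_1}$ and observe that any finite subfamily is bounded above by a countable ordinal $\gamma^{\ast}$, so that, e.g., $(\gamma^{\ast}+1,0)\in A$ is not covered.

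The main (and essentially only) obstacle is purely bookkeeping: making sure $A$ is closed in the \emph{boundaryless} manifold rather than merely in $\ClosedLongRay$. This is precisely why the construction starts at $\alpha\geq 1$ instead of $\alpha\geq 0$, so that $A$ is disjoint from the deleted basepoint of $\OpenLongRay$ and from the glueing point of $\LongLine$. Apart from this, the proof only draws on two standard facts about $\omega_1$: countable subsets are bounded, and initial segments of countable height in $\ClosedLongRay$ are homeomorphic to $[0,1]$.
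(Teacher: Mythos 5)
Your proposal is correct and takes essentially the same approach as the paper: a closed tail of the (long) ray, which is sequentially compact because countable sets of countable ordinals are bounded, but not compact. The only cosmetic difference is that for the Long Line the paper simply uses the whole space itself as the closed, sequentially compact, non-compact subset, whereas you use a tail inside one of the two glued copies; both work.
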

  \begin{proof}
   Take an element $p$ in the Open Long Ray and consider the set $A$ of all elements $\geq p$.
   Then $A$ is closed and sequentially compact but not compact.
   Hence, the Long Ray is not $\PP$.

   The Long Line is closed in itself and sequentially compact but not compact. 
   Thus, it is not $\PP$.
  \end{proof}

  We will now show that a complete locally convex space always has property $\PP$. Then Theorem D follows immediately.

  \begin{lem}
    A locally convex topological vector space $E$ is $\PP$ if at least one of the following conditions is satisfied:
    \begin{itemize}
     \item $E$ is metrizable
     \item $E$ is complete
     \item $E$ is Montel
    \end{itemize}
  \end{lem}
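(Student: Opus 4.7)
The plan is to handle the three sufficient conditions separately, using one auxiliary observation to unify the metrizable and complete cases at the level of precompactness.

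First I would record the following auxiliary fact: \emph{in any Hausdorff locally convex space, every sequentially compact subset $A$ is precompact (i.e.\ totally bounded)}. This is a standard diagonal-extraction argument: if $A$ fails to be precompact, there is a $0$-neighborhood $U$ not covered by finitely many translates of $U$ from $A$, so one builds inductively a sequence $(a_n)\subseteq A$ with $a_{n+1}\notin \bigcup_{k\le n}(a_k+U)$; the resulting sequence satisfies $a_n-a_m\notin U$ for $n\ne m$, so no subsequence can be Cauchy, contradicting sequential compactness.

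For the \emph{metrizable} case the claim is elementary: in a metrizable topological space sequential compactness coincides with compactness, so closedness of the subset is not even required. For the \emph{complete} case I would argue that a closed sequentially compact $A\subseteq E$ is precompact by the auxiliary observation and complete as a closed subset of a complete locally convex space; since in a Hausdorff locally convex space a set is compact if and only if it is precompact and complete, $A$ is compact.

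For the \emph{Montel} case I would first note that a sequentially compact subset $A$ is automatically bounded: otherwise one could find a $0$-neighborhood $U$ and a sequence $a_n\in A$ with $a_n\notin n\,U$, which admits no convergent subsequence since convergent sequences are bounded. Since $E$ is Montel, bounded subsets have compact closure in $E$, and $A$, being closed, equals this closure and is therefore compact. The main obstacle here is essentially bookkeeping---invoking the correct characterization of compactness in locally convex spaces (precompact plus complete) and the defining property of Montel spaces---rather than any deeper analytic machinery; Theorem~D then follows immediately by combining this lemma with the preceding observation that $\LongLine$ and $\OpenLongRay$ are not $\PP$ and that $\PP$ is inherited by closed subsets.
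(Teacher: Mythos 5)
Your proof is correct, and for two of the three cases it follows essentially the same route as the paper: the metrizable case is identical, and in the Montel case both arguments reduce to showing that a sequentially compact set is bounded and then invoking the Heine--Borel property of Montel spaces (the paper deduces boundedness from compactness of $p(A)$ for each continuous seminorm $p$, you from the fact that a convergent sequence is bounded; both are fine). The genuine difference is in the complete case. The paper embeds $E$ as a closed subspace of a product $\prod_\alpha E_\alpha$ of Banach spaces, observes that each projection $\pi_\alpha(A)$ is sequentially compact in a metrizable space and hence compact, and concludes via Tychonoff that $A$ is a closed subset of the compact set $\prod_\alpha K_\alpha$. You instead prove the auxiliary fact that sequentially compact sets are precompact, note that a closed subset of a complete space is complete, and invoke the characterization ``compact $=$ precompact $+$ complete'' in Hausdorff locally convex (indeed uniform) spaces. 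Your route avoids both the representation of $E$ as a subspace of a product of Banach spaces and Tychonoff's theorem, at the cost of relying on the precompact-plus-complete criterion; the paper's route trades that criterion for Tychonoff and the structure theorem. Both are standard and correct, and your precompactness observation has the small additional benefit of giving boundedness in the Montel case for free (precompact sets are bounded), so the two nontrivial cases could be unified further if you wished.
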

  \begin{proof}
   If $E$ is metrizable, then every subset is metrizable. Hence, sequentially compactness is equivalent to compactness.

   Let $E$ be Montel and let $A\subseteq E$ be a closed sequentially compact subset. Then $p(A)$ is compact for every continuous seminorm $p$. Hence, $A$ is bounded. But closed and bounded subsets of Montel spaces are compact.

   Let $E$ be a complete locally convex space and let $A\subseteq E$ be closed and sequentially compact.
   Every locally convex space is isomorphic to a vector subspace of a product of Banach spaces. Thus, we may assume that
   \[
    E\subseteq \prod_{\alpha} E_\alpha,
   \]
   where each $E_\alpha$ is a Banach space. The projection $\smfunc{\pi_\alpha}{E}{E_\alpha}$ is continuous, hence $K_\alpha:=\pi_\alpha(A)$ is sequentially compact in $E_\alpha$. Since $E_\alpha$ is metrizable, each $K_\alpha$ is compact. The set $A$ is now contained in the product $\prod_{\alpha} K_\alpha$ which is compact by Tychonoff. Now, $A$ is closed in $E$ and (since $E$ is complete) $E$ is closed in the product, hence $A$ is closed in $\prod_{\alpha}E_\alpha$ and contained in the compact set $\prod_{\alpha} K_\alpha$. Thus, $A$ is compact.
  \end{proof}

  \begin{rem}
   Recall that a finite dimensional manifold is called \emph{$\omega$-bounded}, if every countable subset is relatively compact. Since every $\omega$-bounded finite dimensional manifold is sequentially compact, the exact same argument as above shows that every non-compact $\omega$-bounded manifold fails to be $\PP$ and hence cannot be embedded as a closed subset of a complete locally convex vector space.
  \end{rem}

  \begin{rem}							\label{rem_noncomplete_emb}
   If one allows locally convex spaces which are not $\PP$, then we can embed every finite dimensional $C^r$-manifold \emph{topologically} as a closed subset.

   Fix a $C^r$-manifold $M$ and consider the set $I:=C^r_c(M)\cup\smset{1_M}$ where $1_M$ denotes the constant $1$-function on $M$. Then the construction in the proof of Theorem A yields a $C^r$-embedding:
   \[
    \Func{\Phi}{M}{\R^I}{x}{(f(x))_{f\in I}.}
   \]
   Now, let $E:=\mathrm{span}(\Phi(M))$ be the real vector subspace of $\R^I$ generated by the image of $\Phi$. One can verify that $\Phi(M)$ is closed in $E$ and since $E$ carries the subspace topology of $\R^I$, the map is still a \emph{topological} embedding. This shows that $M$ can always be embedded as a closed subset in a locally convex vector space.

   Unfortunately, this map $\nnfunc{M}{E}{x}{\Phi(x)}$ will no longer be $C^r$ as a map with values in the non-closed subspace $E\subseteq\R^I$ (although it is $C^r$ as a map with values in the surrounding space $\R^I$).
   Hence, this construction does not give us a $C^r$-embedding of $M$ into $E$.

   It is not known to the author if there is a different construction such that $M$ embeds as a closed $C^r$-submanifold in a locally convex space.
  \end{rem}

\phantomsection
\addcontentsline{toc}{section}{References}
\bibliographystyle{new}
\bibliography{Literatur}

\end{document}